\documentclass[10pt]{article}

\usepackage[a4paper,margin=24mm]{geometry}
\usepackage[T1]{fontenc}
\usepackage[utf8]{inputenc}
\usepackage{lmodern}
\usepackage{microtype}
\usepackage{amsmath,amssymb,amsthm,mathtools}
\usepackage{booktabs,array}
\usepackage{xcolor}
\usepackage{tikz}
\usetikzlibrary{arrows.meta,calc,angles,quotes}
\usepackage{subcaption}
\usepackage{hyperref}
\hypersetup{
  colorlinks=true,
  linkcolor=blue!42!black,
  citecolor=blue!42!black,
  urlcolor=blue!42!black,
  pdftitle={The longest-edge bisection algorithm may produce degenerating tetrahedra},
  pdfauthor={Sergey Korotov}
}

\newtheorem{theorem}{Theorem}[section]
\newtheorem{lemma}[theorem]{Lemma}
\theoremstyle{definition}

\DeclareMathOperator{\diam}{diam}
\DeclareMathOperator{\conv}{conv}
\newcommand{\R}{\mathbb{R}}

\definecolor{selectededge}{RGB}{170,45,35}
\definecolor{retainedface}{RGB}{45,95,155}
\definecolor{discarded}{RGB}{145,145,145}
\tikzset{
  tetedge/.style={line width=0.65pt,black},
  hiddenedge/.style={line width=0.55pt,dashed,gray!75},
  selected/.style={line width=1.45pt,selectededge},
  retainededge/.style={line width=0.95pt,retainedface!85!black},
  cutface/.style={fill=retainedface!18,draw=retainedface!75!black,line width=0.4pt,fill opacity=.55},
  vertex/.style={circle,fill=black,inner sep=1.05pt}
}

\title{The longest-edge bisection algorithm may produce degenerating tetrahedra}
\author{Sergey Korotov\\[-1pt]
\small Department of Business and Mathematics, M\"alardalen University, V\"aster\aa s, Sweden\\[-2pt]
\small \texttt{sergey.korotov@mdu.se, smkorotov@gmail.com}}
\date{August 24, 2026}

\begin{document}
\maketitle

\begin{abstract}
An explicit sequence of tetrahedra generated by the longest-edge bisection algorithm is shown to degenerate.  The example violates shape regularity and both the minimum- and maximum-angle conditions, demonstrating that arbitrary tie-breaking among longest edges does not guarantee nondegeneration.
\end{abstract}

\noindent\textbf{Keywords.} longest-edge bisection; tetrahedron; mesh degeneration; shape regularity; minimum-angle condition; maximum-angle condition.

\noindent\textbf{AMS classification.} 65M50, 65N50, 65N30.

\section{Introduction}

Bisection is a standard technique for constructing nested simplicial meshes.  The longest-edge rule is particularly natural because it is defined entirely by the geometry of the current simplex.  In two dimensions, convergence, lower-angle bounds, and the families of triangle shapes generated by repeated longest-edge bisection are well understood \cite{RosenbergStenger1975,Stynes1979,Stynes1980,Adler1983}.  Convergence questions have also been studied in arbitrary dimension, while marked-edge tetrahedral schemes can be designed to produce conforming locally refined meshes with only finitely many similarity classes \cite{Kearfott1978,ArnoldMukherjeePouly2000}.  Such marked constructions should be distinguished from the unrestricted geometric instruction considered here: at each step, bisect a currently longest edge and, when several edges are tied, allow any one of them to be selected.

The three-dimensional theory includes conforming face-to-face and generalized bisection procedures, regularity criteria for tetrahedral partitions, longest-edge $n$-section variants, and estimates for minimum and maximum angles \cite{KorotovKrizekKropac2008,HannukainenKorotovKrizek2010a,HannukainenKorotovKrizek2010b,HannukainenKorotovKrizek2014,KorotovPlazaSuarez2015,KorotovPlazaSuarez2016,KorotovPlazaSuarezMoreno2019,KorotovLundVatne2021}.  Other studies address special tetrahedral families, nondegeneracy, the number of generated similarity classes, and dynamical descriptions of refinement orbits \cite{PlazaPadronSuarezFalcon2004,PlazaPadronSuarez2005,PerdomoPlaza2014,SuarezTrujilloMoreno2021,PadronPlazaSuarez2023,TrujilloSuarezPadron2024,PadronTrujilloSuarez2025,MichaudKorotov2026a,MichaudKorotov2026b,MichaudKorotov2026c}.  These directions show that the behavior of tetrahedral refinement depends not only on the geometric selection rule, but also on the marking and tie-breaking conventions included in the algorithm.

The purpose of this note is to give a short exact counterexample for the unrestricted tetrahedral rule.  Its mechanism is a recurrent longest-edge tie.  The word ``may'' in the title is essential: the construction proves the existence of a degenerating admissible orbit, not degeneration for every prescribed tie-breaking convention.

\section{The two-step longest-edge bisection}

For $0<a\leq1$, let
\begin{equation}\label{eq:family}
E(a)=\conv\{A,B,C,D\}\subset\R^3,
\end{equation}
where
\begin{equation}\label{eq:vertices}
A=(0,0,a),\quad B=(0,0,0),\quad C=(\sqrt a,0,0),\quad
D=\left(\frac{\sqrt a}{2},\frac{\sqrt{7a}}2,0\right).
\end{equation}
The six squared edge lengths, in the order $(AB,AC,AD,BC,BD,CD)$, are
\begin{equation}\label{eq:Eedges}
\bigl(a^2,\ a+a^2,\ 2a+a^2,\ a,\ 2a,\ 2a\bigr).
\end{equation}
Thus $AD$ is the unique longest edge.  Let
\[
M=\frac{A+D}{2}
\]
and retain, after bisecting $AD$, the child
\[
O(a)=\conv\{M,B,D,C\}.
\]
Its squared edge lengths, in the order $(MB,MD,MC,BD,BC,DC)$, are
\begin{equation}\label{eq:Oedges}
\left(\frac a2+\frac{a^2}{4},\ \frac a2+\frac{a^2}{4},\
 a+\frac{a^2}{4},\ 2a,\ a,\ 2a\right).
\end{equation}
Since $a+a^2/4<2a$, the two edges $BD$ and $DC$ are exactly tied for longest.

Choose $BD$, put $N=(B+D)/2$, and retain
\[
F(a)=\conv\{M,N,B,C\}.
\]

\begin{lemma}[Exact recurrence]\label{lem:recurrence}
After the relabeling
\[
A'=M,\qquad B'=N,\qquad C'=B,\qquad D'=C,
\]
the tetrahedron $F(a)$ is congruent to $E(a/2)$.  Consequently,
\begin{equation}\label{eq:recurrence}
\boxed{
E(a)\xrightarrow[\mathrm{retain}\ MBDC]{\mathrm{bisect}\ AD}
O(a)\xrightarrow[\mathrm{retain}\ MNBC]{\mathrm{bisect}\ BD}
E(a/2).}
\end{equation}
\end{lemma}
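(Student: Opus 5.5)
The plan is to verify the congruence by comparing the six squared edge lengths of $F(a)$, under the stated relabeling, with those of $E(a/2)$. Then I will use the standard fact that two tetrahedra whose vertices correspond with equal pairwise distances are congruent. Concretely, a distance-preserving bijection between two affinely independent $4$-point sets extends uniquely to an affine isometry of $\R^3$. One way to see this is to translate so that $B'$ and the corresponding vertex sit at the origin. The Gram matrices of the remaining three vectors are then determined by the squared distances via polarization, so they coincide. Equal Gram matrices give an orthogonal map carrying one triple of vectors to the other.

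First I will write the coordinates explicitly from \eqref{eq:vertices}:
\[
M=\Bigl(\tfrac{\sqrt a}{4},\tfrac{\sqrt{7a}}{4},\tfrac a2\Bigr),\quad
N=\Bigl(\tfrac{\sqrt a}{4},\tfrac{\sqrt{7a}}{4},0\Bigr),\quad
B=(0,0,0),\quad C=(\sqrt a,0,0).
\]
Next I will compute the squared edge lengths in the order $(A'B',A'C',A'D',B'C',B'D',C'D')=(MN,MB,MC,NB,NC,BC)$. Each entry is a sum of three squares of the displayed coordinates, and the identity $\tfrac{a}{16}+\tfrac{7a}{16}=\tfrac a2$ does most of the work. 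The expected values are
\[
\Bigl(\tfrac{a^2}{4},\ \tfrac a2+\tfrac{a^2}{4},\ a+\tfrac{a^2}{4},\ \tfrac a2,\ a,\ a\Bigr).
\]
Substituting $a/2$ for $a$ in \eqref{eq:Eedges} gives exactly this list, in the same order. Since $0<a/2\le 1$, the tetrahedron $E(a/2)$ belongs to the family \eqref{eq:family}, and the congruence follows.

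For the ``consequently'' part, I will note that \eqref{eq:recurrence} records two admissible longest-edge steps. The first is admissible because $AD$ is the unique longest edge of $E(a)$ by \eqref{eq:Eedges}. The second is admissible because $BD$ is tied for longest in $O(a)$ by \eqref{eq:Oedges}. After these two steps the retained child is congruent to $E(a/2)$. Longest-edge bisection depends only on edge lengths, so the pattern can be iterated. There is no real obstacle: the only care needed is to keep the relabeled vertex order consistent, so that each computed length is matched against the correct entry of \eqref{eq:Eedges}.
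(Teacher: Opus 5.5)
Your proposal is correct and is essentially the paper's proof: compute the six squared edge lengths of $F(a)$ under the relabeling $A'=M$, $B'=N$, $C'=B$, $D'=C$, and match them entry by entry with the sextuple \eqref{eq:Eedges} evaluated at $a/2$. The Gram-matrix argument that equal pairwise distances give congruence, and the admissibility check for the two bisection steps, are details the paper leaves implicit; they are correct.
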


\begin{proof}
The squared edge lengths of $F(a)$ after the indicated relabeling are
\[
\left(\frac{a^2}{4},\ \frac a2+\frac{a^2}{4},\ a+\frac{a^2}{4},\
\frac a2,\ a,\ a\right).
\]
With $a'=a/2$, this is precisely
\[
\bigl((a')^2,\ a'+(a')^2,\ 2a'+(a')^2,\ a',\ 2a',\ 2a'\bigr),
\]
which is the edge sextuple \eqref{eq:Eedges} for $E(a')$.
\end{proof}

\begin{figure}[t]
\centering
\begin{subfigure}[t]{0.31\textwidth}
\centering
\begin{tikzpicture}[x={(2.15cm,0cm)},y={(0.69cm,0.46cm)},z={(0.10cm,1.68cm)},scale=.80]
  \coordinate (A) at (0,0,1);
  \coordinate (B) at (0,0,0);
  \coordinate (C) at (1,0,0);
  \coordinate (D) at (.5,1.322876,0);
  \fill[gray!10] (B)--(C)--(D)--cycle;
  \draw[hiddenedge] (B)--(D);
  \draw[tetedge] (B)--(C)--(D) (A)--(B) (A)--(C);
  \draw[selected] (A)--(D);
  \foreach \P in {A,B,C,D}{\node[vertex] at (\P) {};}
  \node[above left=-1pt] at (A) {$A$};
  \node[below left=-1pt] at (B) {$B$};
  \node[below right=-1pt] at (C) {$C$};
  \node[above right=-1pt] at (D) {$D$};
\end{tikzpicture}
\caption{$AD$ is uniquely longest.}
\end{subfigure}\hfill
\begin{subfigure}[t]{0.31\textwidth}
\centering
\begin{tikzpicture}[x={(2.15cm,0cm)},y={(0.69cm,0.46cm)},z={(0.10cm,1.68cm)},scale=.80]
  \coordinate (A) at (0,0,1);
  \coordinate (B) at (0,0,0);
  \coordinate (C) at (1,0,0);
  \coordinate (D) at (.5,1.322876,0);
  \coordinate (M) at (.25,.661438,.5);
  \fill[cutface] (M)--(B)--(C)--cycle;
  \draw[discarded,line width=.5pt] (A)--(B)--(C)--(A) (A)--(M);
  \draw[hiddenedge] (B)--(D);
  \draw[retainededge] (M)--(B) (M)--(C) (M)--(D) (B)--(C);
  \draw[selected] (B)--(D)--(C);
  \foreach \P in {A,B,C,D,M}{\node[vertex] at (\P) {};}
  \node[above left=-1pt] at (M) {$M$};
  \node[below left=-1pt] at (B) {$B$};
  \node[below right=-1pt] at (C) {$C$};
  \node[above right=-1pt] at (D) {$D$};
\end{tikzpicture}
\caption{$BD=DC$ are tied.}
\end{subfigure}\hfill
\begin{subfigure}[t]{0.31\textwidth}
\centering
\begin{tikzpicture}[x={(2.15cm,0cm)},y={(0.69cm,0.46cm)},z={(0.10cm,1.68cm)},scale=.80]
  \coordinate (B) at (0,0,0);
  \coordinate (C) at (1,0,0);
  \coordinate (D) at (.5,1.322876,0);
  \coordinate (M) at (.25,.661438,.5);
  \coordinate (N) at (.25,.661438,0);
  \fill[cutface] (M)--(N)--(C)--cycle;
  \draw[discarded,line width=.5pt] (M)--(D)--(C) (N)--(D);
  \draw[hiddenedge] (B)--(D);
  \draw[retainededge] (M)--(B) (M)--(C) (M)--(N) (N)--(B) (N)--(C) (B)--(C);
  \draw[selected] (B)--(N)--(D);
  \foreach \P in {B,C,D,M,N}{\node[vertex] at (\P) {};}
  \node[below left=-1pt] at (B) {$B$};
  \node[below right=-1pt] at (C) {$C$};
  \node[above left=-1pt] at (M) {$M=A'$};
  \node[below left=-1pt] at (N) {$N=B'$};
\end{tikzpicture}
\caption{$MNBC\cong E(a/2)$.}
\end{subfigure}
\caption{The two steps of the bisection.  Red edges are selected or tied longest edges; the retained child is emphasized in blue.}
\label{fig:orbit}
\end{figure}

Starting with $E(1)$ and iterating Lemma~\ref{lem:recurrence} produces
\begin{equation}\label{eq:chain}
E(1)\longrightarrow O(1)\longrightarrow E(1/2)\longrightarrow O(1/2)
\longrightarrow E(1/4)\longrightarrow\cdots.
\end{equation}
Every arrow is a valid longest-edge bisection.  Only the arrows leaving $O(a)$ require a choice from a tie.

\section{Violation of regularity and angle conditions}

For a tetrahedron $T$, write $h_T=\diam T$.  Shape regularity requires a constant $c>0$ such that $|T|\geq c h_T^3$ throughout the family; equivalent regularity criteria are discussed in \cite{BrandtsKorotovKrizek2008}.  The tetrahedral minimum-angle condition requires all face and interior dihedral angles to be bounded below by a positive constant, while the maximum-angle condition requires them to be bounded above by a constant smaller than $\pi$ \cite{Krizek1992,KorotovVatne2020,KorotovKrizek2024}.

Let $a_k=2^{-k}$, $E_k=E(a_k)$, and $O_k=O(a_k)$.  The base triangle $BCD$ has area $\sqrt7a/4$, and the height from $A$ to the base plane is $a$.  Hence
\begin{equation}\label{eq:volume}
|E(a)|=\frac{\sqrt7}{12}a^2,
\qquad
h_{E(a)}^2=|AD|^2=a(2+a).
\end{equation}

\begin{theorem}[Three simultaneous failures]\label{thm:failure}
The admissible sequence \eqref{eq:chain} violates shape regularity, the tetrahedral minimum-angle condition, and the tetrahedral maximum-angle condition.  More precisely,
\begin{align}
\frac{|E_k|}{h_{E_k}^3}
&=\frac{\sqrt7}{12}\frac{\sqrt{a_k}}{(2+a_k)^{3/2}}
\longrightarrow0, \label{eq:regfail}\\
\delta(a_k)
&=\arctan\sqrt{\frac{8a_k}{7}}
\longrightarrow0, \label{eq:minfail}\\
\cos\Theta(a_k)
&=\frac{2a_k-7}{\sqrt{(4a_k+7)(8a_k+7)}}
\longrightarrow-1, \label{eq:maxfail}
\end{align}
where $\delta(a)$ is the interior dihedral angle of $E(a)$ at $CD$, and $\Theta(a)$ is the interior dihedral angle of $O(a)$ at $MC$.  Thus $\Theta(a_k)\to\pi$.
\end{theorem}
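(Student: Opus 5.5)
The plan is to verify the three formulas by direct computation in the explicit coordinates \eqref{eq:vertices}. Lemma~\ref{lem:recurrence} guarantees that the chain \eqref{eq:chain} is admissible and passes through every $E_k$ and $O_k$. So it is enough to exhibit, along this chain, one quantity that violates each of the three conditions. No further structural argument should be needed.

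\emph{Regularity.} This follows immediately from \eqref{eq:volume}. Dividing $|E(a)|=\frac{\sqrt7}{12}a^2$ by $h_{E(a)}^3=a^{3/2}(2+a)^{3/2}$ gives \eqref{eq:regfail}, which tends to $0$ as $a_k\to0$. Hence no constant $c>0$ works.

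\emph{Minimum angle.} The faces meeting at $CD$ are $BCD$, which lies in the plane $z=0$, and $ACD$. The orthogonal projection of $A$ onto the plane $z=0$ is $B$, a vertex of $BCD$. Therefore the interior dihedral angle at $CD$ is acute, and
\[
\tan\delta=\frac{a}{\operatorname{dist}(B,\text{line }CD)}.
\]
The distance equals $2|BCD|/|CD|=\bigl(\sqrt7 a/2\bigr)/\sqrt{2a}=\sqrt{7a/8}$. This yields $\tan\delta=\sqrt{8a/7}$, which is \eqref{eq:minfail}.

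\emph{Maximum angle.} This is the only genuinely computational step. I would write
\[
M=\Bigl(\tfrac{\sqrt a}{4},\tfrac{\sqrt{7a}}{4},\tfrac a2\Bigr),\qquad u=M-C.
\]
Next I would form the components of $B-C$ and $D-C$ orthogonal to $u$:
\[
p=(B-C)-\frac{(B-C)\cdot u}{|u|^2}\,u,\qquad q=(D-C)-\frac{(D-C)\cdot u}{|u|^2}\,u.
\]
Then $\cos\Theta=p\cdot q/(|p|\,|q|)$. Equivalently,
\[
\cos\Theta=\frac{|u|^2\,(b\cdot d)-(b\cdot u)(d\cdot u)}{\sqrt{\bigl(|u|^2|b|^2-(b\cdot u)^2\bigr)\bigl(|u|^2|d|^2-(d\cdot u)^2\bigr)}},
\]
with $b=B-C$ and $d=D-C$. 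All the needed dot products are polynomial in $a$, and most can be read off from the squared edge lengths \eqref{eq:Oedges} via the polarization identity. For example, $|u|^2=a+a^2/4$, $|b|^2=a$, $|d|^2=2a$, and $b\cdot d=\tfrac12(a+2a-2a)=a/2$. After cancelling common powers of $a$, the target is $(2a-7)/\sqrt{(4a+7)(8a+7)}$.

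The main obstacle is this algebraic simplification. In particular, I must make sure the two radicands factor as claimed, presumably after multiplying through by $16/a^{2}$ or a similar factor. I also need to check the sign, to confirm that it is the interior angle. As a sanity check, at $a\to0$ the limit is $-7/7=-1$.

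Letting $a_k\to0$ then gives $\cos\Theta(a_k)\to-1$, that is $\Theta(a_k)\to\pi$. Since each $O_k$ lies in the admissible sequence, the maximum-angle condition fails. Together with the previous two parts, this completes the proof.
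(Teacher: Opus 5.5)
Your proposal is correct and follows the paper's proof essentially step by step. It uses the volume formula for the regularity ratio, the cross-section perpendicular to $CD$ with distance $\sqrt{7a/8}$ from $B$, and the projection orthogonal to the edge $MC$; basing the projection at $C$ instead of $M$ is immaterial, the angle between the components orthogonal to the edge is automatically the interior dihedral angle (so the sign is not an issue), and the algebra you left open closes as you predicted, with $b\cdot u=3a/4$, $d\cdot u=5a/4$, numerator $\frac{a^2}{16}(2a-7)$, and radicands $\frac{a^2}{16}(4a+7)$ and $\frac{a^2}{16}(8a+7)$.
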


\begin{proof}
Formula \eqref{eq:regfail} follows directly from \eqref{eq:volume}; its right-hand side is asymptotic to
\[
\frac{\sqrt7}{12\,2^{3/2}}\,2^{-k/2}.
\]
Therefore no uniform positive lower bound for $|T|/h_T^3$ is possible.

To obtain \eqref{eq:minfail}, intersect $E(a)$ with a plane perpendicular to $CD$.  Since
\[
d(B,CD)=\frac{2|BCD|}{|CD|}=\sqrt{\frac{7a}{8}}
\]
and $A$ lies vertically a distance $a$ above $B$, the relevant cross-sectional angle is
\[
\delta(a)=\arctan\frac{a}{\sqrt{7a/8}}
=\arctan\sqrt{\frac{8a}{7}}.
\]
Hence an interior dihedral angle tends to zero.

For the angle at $MC$ in $O(a)$, let $u=C-M$ and project $B-M$ and $D-M$ onto the plane perpendicular to $u$.  Calling the projected vectors $p_B$ and $p_D$, direct calculation gives
\[
|p_B|^2=\frac{a(4a+7)}{4(a+4)},\qquad
|p_D|^2=\frac{a(8a+7)}{4(a+4)},\qquad
p_B\!\cdot p_D=\frac{a(2a-7)}{4(a+4)}.
\]
Their angle is the interior dihedral angle $\Theta(a)$, which proves \eqref{eq:maxfail}.  As $a\to0$, its cosine tends to $-1$, and therefore $\Theta(a)\to\pi$.
\end{proof}

In fact,
\begin{equation}\label{eq:rates}
\delta(a)\sim\sqrt{\frac87}\sqrt a,
\qquad
\pi-\Theta(a)\sim\frac4{\sqrt7}\sqrt a.
\end{equation}
Thus the normalized-volume ratio and both angular gaps decay at the same geometric order $2^{-k/2}$.

\section{Tie-breaking and conforming partitions}

The exact equality $BD=DC$ in every $O(a)$ is the decisive feature.  Consequently, the example establishes
\[
\boxed{\text{The rule ``choose any longest edge'' does not guarantee regularity in three dimensions.}}
\]
It does not assert that every fixed tie-breaking convention follows this orbit.  Nevertheless, the bad branch is selected by a simple deterministic rule: among tied longest edges, choose the one whose opposite edge is longer.  In $O(a)$, the edge opposite $BD$ is $MC$, while the edge opposite $DC$ is $MB$, and
\[
|MC|^2=a+\frac{a^2}{4}>\frac a2+\frac{a^2}{4}=|MB|^2.
\]

The local chain can also be scheduled inside the global face-to-face longest-edge algorithm.  Before selecting the next prescribed edge of length $L$, repeatedly bisect globally longest edges longer than $L$.  This waiting process is finite: if a globally longest edge has length $c$, every new midpoint-to-opposite-vertex edge has length at most $(\sqrt3/2)c$.  Once the global maximum equals $L$, the prescribed edge may be chosen from the tie.  Repetition yields conforming partitions containing all tetrahedra $E_k$ and $O_k$, so the same three failures occur at the partition level.

\section{Conclusion}

The family \eqref{eq:family} supplies a closed-form two-bisection recurrence that halves a shape parameter while retaining admissibility under the longest-edge rule.  Along the resulting infinite sequence, the normalized volume tends to zero, one dihedral angle tends to zero, and another tends to $\pi$.  Therefore tetrahedral longest-edge bisection may produce degenerating elements unless tie resolution is included explicitly in the algorithm and analyzed as part of the regularity theorem.

\end{document}